\documentclass[aap,preprint]{imsart}

\RequirePackage[OT1]{fontenc}
\RequirePackage{amsthm,amsmath,natbib}
\RequirePackage[colorlinks,citecolor=blue,urlcolor=blue]{hyperref}
\RequirePackage{hypernat}
\usepackage{bm}
\arxiv{math.PR/0000000}

\startlocaldefs
\numberwithin{equation}{section}
\theoremstyle{plain}
\newtheorem{theorem}{Theorem}[section]
\newtheorem{proposition}{Proposition}
\newtheorem{lemma}{Lemma}

\endlocaldefs

\begin{document}

\begin{frontmatter}
\title{Formulas for the Laplace Transform of Stopping Times based on Drawdowns and Drawups}
\runtitle{Drawdowns and Drawups}

\begin{aug}
\author{\fnms{Hongzhong} \snm{Zhang}\thanksref{m1}\ead[label=e1]{hzhang3@gc.cuny.edu}}
\and
\author{\fnms{Olympia} \snm{Hadjiliadis}\thanksref{t1,m1,m2}\ead[label=e2]{ohadjiliadis@brooklyn.cuny.edu}}

\thankstext{t1}{Corresponding author}
\runauthor{H. Zhang and O.Hadjiliadis}

\affiliation{Graduate Center\thanksmark{m1} and Brooklyn College\thanksmark{m2}, C.U.N.Y.}

\address{Department of Mathematics\\365 Fifth Avenue\\
New York, NY 10016\\
\printead{e1}\\
\phantom{E-mail:\ }\printead*{e2}}

\end{aug}
\begin{abstract}
In this work we study drawdowns and drawups of general diffusion processes. The drawdown process is defined as the current drop of the process from its running maximum, while the drawup process is defined as the current increase over its running minimum.  The drawdown and the drawup are the first hitting times of the drawdown and the drawup processes respectively. In particular, we derive a closed-form formula for the Laplace transform of the probability density of the drawdown of $a$ units when it precedes the drawup of $b$ units. We then separately consider the special case of drifted Brownian motion, for which we derive a closed form formula for the above-mentioned density by inverting the Laplace transform. Finally, we apply the results to a problem of interest in financial risk-management and to the problem of transient signal detection and identification of two-sided changes in the drift of general diffusion processes.
\end{abstract}

\end{keyword}\begin{keyword}[class=AMS]
\kwd[Primary ]{60G40}
\kwd[; secondary ]{60J60}
\end{keyword}

\begin{keyword}
\kwd{Drawdowns and Drawups; Diffusion Process;  Stopping Time}
\end{keyword}

\end{frontmatter}

\section{Introduction}
\label{intro}
In this paper we derive the Laplace transform of the probability density of the drawdown of $a$ units when it precedes the drawup of $b$ units for a general diffusion process. The drawdown process is defined as the current drop of the process from its running maximum, while the drawup process is defined as the current increase over its running minimum.  The drawdown and the drawup are then the first hitting times of the drawdown and the drawup processes respectively. The derivation is first accomplished in the case that $a=b$, by drawing the connection of the relevant event to the range process. We then consider the case $a\neq b$ and derive the Laplace transform through path decomposition. A key element in the above derivation is a function related to the first hitting times of the underlying diffusion process. We then consider the special case of drifted Brownian motion with drift $\mu$ and volatility $\sigma$. In this case we are able to invert the Laplace transform and derive the density of the drawdown of $a$ units when it precedes the drawup of $b$ units, which we denote by $p^{(\mu)}(t;a,b)$. Finally, we discuss the applications of the results to a problem of interest in financial risk-management and to the problem of transient signal detection and identification of two-sided changes in the drift of general diffusion processes.

Our results extend the work of Taylor \cite{Taylor} and Lehoczky \cite{JPL77}.
Taylor \cite{Taylor} derives the joint Laplace transform of the drawdown and the maximum stopped at the drawdown in a drifted Brownian motion model. Lehoczky \cite{JPL77} extends the above result in the case of a diffusion process. However, none of these results relate the drawdown to the drawup. In a recent paper of Salminen \& Vallois \cite{SalmVall}, the joint distribution of the maximum drawdown and the maximum drawup processes is studied in a drifted Brownian motion  model; yet it is not possible to extract information on the joint distribution of the drawdown and the drawup from this paper. In Hadjiliadis \& Vecer \cite{HadjVece06}, a closed-form formula is derived for the probability that the drawdown precedes the drawup in a drifted Brownian motion model. This result is later extended to diffusion processes in Pospisil, Vecer \& Hadjiliadis \cite{Posp09}. In Zhang \& Hadjiliadis \cite{ZhanHadj09}, the authors obtain the probability that the drawup of $a$ units precedes the drawdown of equal units in a drifted Brownian motion model in a finite time-horizon. However, the approach used there only applies to a drifted Brownian motion model, cannot be extended to a general diffusion process, and does not work in the general case $a\neq b$. In this paper, we supersede these limitations through the Laplace transform method.

Drawdown processes have been extensively used in the financial risk-management literature. Grossman \& Zhou \cite{GrosZhou}, Cvitanic \& Karatzas \cite{CvitKara}, Chekhlov, Uryasev \& Zabarankin \cite{ChekUryaZaba} studied portfolio optimization under constraints on the drawdown process. Magdon-Ismail et.\ al. \cite{Magdetal} determined the distribution of the maximum drawdown process of Brownian motion, based on which they described another time-adjusted measure of performance known as the the Calmar ratio (see \cite{MagdAtiy}). Meilijson \cite{Meil} proved that the drawdown can be viewed as the optimal exercise time of a certain type of look-back American put option. Other works which describe drawdown processes as dynamic measures of risk include Vecer \cite{Vece06,Vece07}, Pospisil \& Vecer \cite{PospVece07}, Pospisil, Vecer \& Xu \cite{PospVeceXu}, Zhang \& Hadjiliadis \cite{ZhanHadj09}. An overview of the existing techniques for analysis of market crashes as well as a collection of empirical studies of the drawdown process and the maximum drawdown process please refer to Sornette \cite{Sorn}.

Drawdown processes do not only provide dynamic measures of risk, but can also be viewed as measures of ``relative regret''. Similarly drawup processes can be viewed as measures of ``relative satisfaction''. Thus the a drawdown or a drawup of a certain number of units may signal the time in which an investor may choose to change his/her investment position depending on his/her perception of future moves of the market and his/her risk aversion. Using the results in our paper we are able to calculate the probability that a relative drawdown of $(100\times\alpha)\%$ occurs before a relative drawup of $(100\times\beta)\%$ in a finite time-horizon. On the other hand, a digital option on the event that the relative drawdown occurs before the relative drawup could also be seen as a means of protection. Our paper provides a closed-form formula for the risk-neutral price of this digital option at time 0 both in the case of an infinite maturity and in the case of a finite maturity.

Drawdown and drawup processes have also been used in the problem of quickest detection of abrupt changes in a stochastic process. More specifically, consider the situation in which a diffusion process is sequentially observed. At some unknown point in time, possibly as a result of the onset of a signal, the dynamics of the process change abruptly in one of two possible opposite directions in the drift. Drawdowns and drawups then provide a detection mechanism of the change point for each of the possible changes. They are known as CUSUM stopping times in the statistics literature, and their properties have been extensively studied (see Barnard \cite{Barn59}, Dobben \cite{Dobben68}, Bissell \cite{Biss69}, Woodall \cite{Wood84}, Hadjiliadis \& Moustakides \cite{HadjMous06}, Khan \cite{Khan08}, and Poor \& Hadjiliadis \cite{PHadj08}).  A challenging problem in engineering is the detection and identification of such signals when they are only present for a finite period of time. These signals are known as transient signals. Using the results in this paper, it is possible to derive closed-form formulas for the probability of misidentification of the direction of the change in the drift when the signal has exponential life. Moreover, using our results for drifted Brownian motion, we derive this probability when the transient signal is present for a finite period of time $T$.

The paper is structured in the following way: definitions and a fundamental lemma are introduced in Section 2. In Section 3, we derive the Laplace transform of the drawdown of $a$ units when it precedes the drawup of $b$ units, in the cases $a=b$ (Theorem \ref{thm1}), $a>b$ (Theorem \ref{thm2}) and $a<b$ (Theorem \ref{thm3}). A special case of a drifted Brownian motion model is discussed in Section 4, where we also derive the closed-form density $p^{(\mu)}(t;a,b)$  by analytical inversion of the Laplace transform. We then present applications of our results in a problem of risk-management and the problem of transient signal detection and identification of two-sided alternatives in Section 5. Finally, we conclude with some closing remarks in Section 6.
\section{Drawdown and Drawup processes}
\label{define}
We begin with a mathematical definition of a drawdown and a drawup in a diffusion model and present the fundamental lemma.

Consider an interval $I=(l,r)$, where $-\infty\le l<r\le\infty$. Let $(\Omega, \mathcal{F}, P)$ be a probability space, $\{B_t; t\ge0\}$ a Brownian motion, and $\{X_t;t\ge 0\}$ the unique strong solution of the following stochastic differential equation:
\begin{eqnarray}
dX_t&=&\mu(X_t)dt+\sigma(X_t)dB_t,~X_0=x\in I,
\end{eqnarray}
where $X_t\in I$ for all $t\ge0$, and $\sigma(u)>0$ for all $u\in I$. We will use $P_x(\cdot)$ to denote $P(\cdot|X_0=x)$.

The drawdown and drawup processes are defined respectively as
\begin{eqnarray}
DD_t&=&\sup_{0\le s\le t}X_s-X_t,\\
DU_t&=&X_t-\inf_{0\le s\le t}X_s.
\end{eqnarray}
The drawdown of $a$ units and the drawup of $b$ units are then defined respectively as
\begin{eqnarray}
T_D(a)&=&\inf\{t\ge0|DD_t=a\}, ~a>0,\\
T_U(b)&=&\inf\{t\ge 0|DU_t=b\}, ~b>0,
\end{eqnarray}
where, by convention, we assume that $\inf\phi=\infty$.

In the following section, we derive the main results in this paper. We need the following fundamental lemma to finish the proofs.
\begin{lemma} Let us denote  by $\tau_u$, $u\in I$, the first hitting time of the process $\{X_t; t\ge0\}$ to $u$. That is,
\begin{eqnarray*}
\tau_u=\inf\{t> 0|X_t=u\},
\end{eqnarray*}
For $y\le x\le z$ and $\lambda\ge0$, define
\begin{eqnarray}
E_x\left[e^{-\lambda\tau_y}\cdot1_{\{\tau_y<\tau_z\}}\right]&:=&\ell^X(y,z;x,\lambda).
\end{eqnarray}
Then
\begin{eqnarray}\label{ell}
\ell^X(y,z;x,\lambda)=\frac{g(x;\lambda)h(z;\lambda)-g(z;\lambda)h(x;\lambda)}{g(y;\lambda)h(z;\lambda)-g(z;\lambda)h(y;\lambda)}
\end{eqnarray}
with $g(\cdot;\lambda)$ and $h(\cdot;\lambda)$ being any two independent solutions of the ordinary differential equation
\begin{eqnarray}
\frac{1}{2}\sigma^2(u)\frac{\partial^2 f}{\partial
u^2}+\mu(u)\frac{\partial f}{\partial u}=\lambda f.
\end{eqnarray}
\end{lemma}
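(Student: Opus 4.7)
The plan is to reduce the hitting-time identity to a linear-algebra problem via an optional stopping argument applied to exponential martingales built from solutions of the associated ODE. First, I would note that for any $C^2$ function $f$ solving $\frac{1}{2}\sigma^2(u) f''(u) + \mu(u) f'(u) = \lambda f(u)$, an application of It\^o's formula shows that
\begin{equation*}
M_t^f := e^{-\lambda t} f(X_t) - f(x) - \int_0^t e^{-\lambda s}\sigma(X_s) f'(X_s)\,dB_s
\end{equation*}
has zero drift, so $\{e^{-\lambda t} f(X_t)\}_{t\ge 0}$ is a local martingale under $P_x$.

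Next, I would introduce the stopping time $\tau = \tau_y \wedge \tau_z$. Because $X$ is a regular diffusion on $I$ and $y \le x \le z$, the process $X_{t\wedge\tau}$ stays in the compact interval $[y,z]$, on which $f$ is bounded; moreover $\tau < \infty$ $P_x$-almost surely by the regularity of $X$. Consequently, the stopped local martingale $e^{-\lambda(t\wedge\tau)} f(X_{t\wedge\tau})$ is uniformly bounded, and optional stopping yields
\begin{equation*}
f(x) = E_x\bigl[e^{-\lambda\tau} f(X_\tau)\bigr] = f(y)\,\ell^X(y,z;x,\lambda) + f(z)\,\ell^X(z,y;x,\lambda),
\end{equation*}
where I have split on the disjoint events $\{\tau_y<\tau_z\}$ and $\{\tau_z<\tau_y\}$ (using that $\tau_y = \tau_z$ has $P_x$-probability zero by continuity and regularity).

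Now I would apply the previous identity to the two independent solutions $g(\cdot;\lambda)$ and $h(\cdot;\lambda)$, obtaining a linear system
\begin{align*}
g(x;\lambda) &= g(y;\lambda)\,\ell^X(y,z;x,\lambda) + g(z;\lambda)\,\ell^X(z,y;x,\lambda),\\
h(x;\lambda) &= h(y;\lambda)\,\ell^X(y,z;x,\lambda) + h(z;\lambda)\,\ell^X(z,y;x,\lambda),
\end{align*}
in the two unknowns $\ell^X(y,z;x,\lambda)$ and $\ell^X(z,y;x,\lambda)$. The determinant of the coefficient matrix is $g(y;\lambda)h(z;\lambda)-g(z;\lambda)h(y;\lambda)$, which is nonzero by the independence of $g$ and $h$ together with the fact that solutions of a second-order linear ODE vanishing simultaneously at two distinct points would be linearly dependent. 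Solving by Cramer's rule gives the stated formula (\ref{ell}).

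The main technical obstacle will be justifying the optional stopping step cleanly for a general diffusion, in particular ensuring $P_x(\tau < \infty) = 1$ and that $f(X_{t\wedge\tau})$ is uniformly bounded despite the fact that $g$ and $h$ need not be bounded on the whole of $I$. Both follow from confining attention to the compact sub-interval $[y,z]\subset I$ up to time $\tau$, so no delicate estimates are needed; one only has to appeal to regularity of the diffusion on this interval. Once this is in place, the rest is an exercise in linear algebra.
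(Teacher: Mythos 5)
Your proposal is correct in substance, but it is worth noting that the paper does not prove this lemma at all: it simply cites Lehoczky (1977), page 603, where the result is established by essentially the same martingale argument you give (which is also the classical It\^o--McKean derivation). So you have supplied a self-contained proof where the paper defers to a reference; the approach is the standard one, and the structure --- local martingale $e^{-\lambda t}f(X_t)$ from It\^o's formula, optional stopping at $\tau=\tau_y\wedge\tau_z$ justified by boundedness on the compact interval $[y,z]$ and a.s.\ finiteness of the exit time for a regular diffusion, then Cramer's rule on the $2\times 2$ system --- is exactly right.

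One step deserves more care: your justification that the determinant $g(y;\lambda)h(z;\lambda)-g(z;\lambda)h(y;\lambda)$ is nonzero. If it vanished, what follows is that some \emph{nontrivial} linear combination $\alpha g+\beta h$ vanishes at both $y$ and $z$; this does not by itself contradict linear independence of $g$ and $h$, since a nontrivial solution of a second-order linear ODE can perfectly well have two zeros (take $f''=-f$, i.e.\ the analogue with $\lambda<0$, where $\sin$ vanishes at $0$ and $\pi$). The correct way to close this is to invoke $\lambda\ge 0$: for $\lambda>0$ a solution of $\tfrac12\sigma^2 f''+\mu f'=\lambda f$ vanishing at $y$ and $z$ cannot have a positive interior maximum or negative interior minimum on $[y,z]$ (at such a point $f'=0$ and $\tfrac12\sigma^2 f''=\lambda f$ would have the wrong sign), hence $f\equiv 0$ on $[y,z]$ and, by uniqueness for the ODE, everywhere --- contradicting nontriviality; for $\lambda=0$ the solutions are affine functions of the (strictly monotone) scale function and the claim is immediate. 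With that repair the argument is complete.
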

\begin{proof}
See \cite{JPL77}, page 603.
\end{proof}

\section{Mathematical Results}
\label{result}
In this section we derive formulas for the Laplace transform of the probability density of the drawdown of $a$
units when it precedes the drawup of $b$ units, for any $a,b>0$ satisfying $x\pm a,x\pm b\in \bar{I}=[l,r]$. We have
\begin{eqnarray}\label{lap}
~~\qquad E_x\left[e^{-\lambda
T_D(a)}\cdot1_{\{T_D(a)<T_U(b)\}}\right]&=&\int_0^\infty e^{-\lambda
t}P_x(T_D(a)\in dt, T_U(b)>t).
\end{eqnarray}
In the sequel we denote the Laplace transform (\ref{lap}) by $L_x^X(\lambda; a,b)$.
\subsection{The case of $a=b>0$}
We determine $L_x^X(\lambda;a,a)$ for $a>0$ in this paragraph.
\begin{theorem}\label{thm1}
For $a>0$ and $\lambda>0$, we have
\begin{eqnarray}
\label{lequal}L_x^{X}(\lambda;a,a)=\int_{x-a}^x\frac{\partial}{\partial
a}\ell^X(u,u+a;x,\lambda)du.
\end{eqnarray}
\end{theorem}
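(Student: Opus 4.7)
My approach is to identify $T_D(a)\wedge T_U(a)$ with the first passage time of the range process to level $a$ and then decompose the path at that stopping time. Writing $M_t := \sup_{s \le t} X_s$, $m_t := \inf_{s \le t} X_s$, $R_t := M_t - m_t$, and $\sigma_a := \inf\{t \ge 0 : R_t = a\}$, the identity $DD_t + DU_t = R_t$ forces $DD_t, DU_t < a$ whenever $R_t < a$, so $\sigma_a \le T_D(a)\wedge T_U(a)$. For the reverse inequality I would note that $R$ only strictly increases at times when $X$ coincides with $M$ or $m$, and path-continuity therefore forces $X_{\sigma_a} \in \{M_{\sigma_a}, m_{\sigma_a}\}$, giving either $DU_{\sigma_a} = a$ or $DD_{\sigma_a} = a$. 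In particular, on $\{T_D(a) < T_U(a)\}$ we have $X_{\sigma_a} = m_{\sigma_a}$ and $M_{\sigma_a} = m_{\sigma_a} + a$, so that $m_{\sigma_a}$ takes values in $[x-a, x]$.

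Next I would disintegrate the Laplace transform along $u := m_{\sigma_a}$, writing
\begin{equation*}
L_x^X(\lambda;a,a) = \int_{x-a}^{x}\rho(u)\,du
\end{equation*}
for a density $\rho$ to be identified. For each $u$ and small $\delta > 0$, a typical path contributing to the event $\{T_D(a)<T_U(a),\ m_{\sigma_a}\in[u,u+\delta]\}$ must first hit $u+a$ before $u$, and then, starting from $u+a$, descend to $u$ without ever crossing above $u+a+\delta$. Applying the strong Markov property at $\tau_{u+a}$, the Laplace transform of this event factorizes as
\begin{equation*}
E_x\!\left[e^{-\lambda\tau_{u+a}}\mathbf{1}_{\{\tau_{u+a}<\tau_u\}}\right]\cdot \ell^X(u,\,u+a+\delta;\,u+a,\,\lambda),
\end{equation*}
so dividing by $\delta$ and letting $\delta\to 0^+$ yields a closed expression for $\rho(u)$.

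The final step is to recognize this expression as $\partial_a\ell^X(u,u+a;x,\lambda)$. Indeed, directly from the definition of $\ell^X$,
\begin{equation*}
\ell^X(u,u+a+\delta;x,\lambda) - \ell^X(u,u+a;x,\lambda) = E_x\!\left[e^{-\lambda\tau_u}\mathbf{1}_{\{\tau_{u+a}<\tau_u<\tau_{u+a+\delta}\}}\right],
\end{equation*}
and strong Markov at $\tau_{u+a}$ factorizes the right-hand side into precisely the same two pieces. Dividing by $\delta$ and passing to the limit therefore identifies $\rho(u) = \partial_a\ell^X(u,u+a;x,\lambda)$, which yields the claimed integral representation.

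I expect the principal obstacle to lie in making the density argument rigorous. The limiting post-$\tau_{u+a}$ event ``$X$ reaches $u$ without ever exceeding $u+a$'' has probability zero for a continuous diffusion, so $\rho(u)$ exists only as a limit of $O(\delta)$-probabilities normalized by $\delta$; moreover, the post-$\tau_{u+a}$ event must be aligned carefully with the original event $\{T_D<T_U,\,m_{\sigma_a}\in[u,u+\delta]\}$, since the two differ in a negligible but non-empty way due to possible excursions above $u+a$. Justifying both the pointwise limit, via the explicit smooth form of $\ell^X$ in \eqref{ell}, and the interchange of limit and $u$-integration, via dominated convergence, will be the technical heart of the argument.
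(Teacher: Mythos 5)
Your proposal is correct, and its overall architecture matches the paper's: identify $T_D(a)\wedge T_U(a)$ with the first passage time of the range process to level $a$, disintegrate over the level $u=m_{\sigma_a}=X_{T_D(a)}\in[x-a,x]$, and show that the resulting density in $u$ is $\frac{\partial}{\partial a}\ell^X(u,u+a;x,\lambda)$. The difference is in how that density is extracted. The paper identifies the infinitesimal event exactly, $\{T_D(a)\in dt,\ T_U(a)>t,\ X_t=u\}=\{\tau_u\in dt,\ \sup_{s\le t}X_s=u+a\}$, and then reads off the density of the maximum at $u+a$ as $\frac{\partial}{\partial a}$ applied to $E_x[e^{-\lambda\tau_u}1_{\{\tau_u<\tau_{u+a}\}}]=\ell^X(u,u+a;x,\lambda)$; no strong Markov decomposition is used in this theorem. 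You instead thicken the event by $\delta$, factor it at $\tau_{u+a}$ by the strong Markov property, and then recognize the product as the increment $\ell^X(u,u+a+\delta;x,\lambda)-\ell^X(u,u+a;x,\lambda)$ --- which, as you note, factorizes at $\tau_{u+a}$ in exactly the same way, so your two decompositions cancel and you land on the same derivative. What your route buys is an explicit probabilistic meaning for $\partial_a\ell^X$ as the contribution of paths with $\tau_{u+a}<\tau_u<\tau_{u+a+\delta}$; what it costs is the alignment issue you flag yourself: the event $\{T_D(a)<T_U(a),\ m_{\sigma_a}\in[u,u+\delta]\}$ is not identical to $\{\tau_{u+a}<\tau_u<\tau_{u+a+\delta}\}$, and $T_D(a)\neq\tau_u$ on the discrepancy set; both defects are $O(\delta^2)$ (respectively $o(\delta)$) by a gambler's-ruin estimate, so the argument does close, but this bookkeeping is extra work that the paper's exact event identity sidesteps --- at an otherwise comparable level of rigor, since the paper likewise differentiates a distribution function in the barrier and integrates over $u$ without further justification.
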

\begin{proof}
First, it is easily seen that for $t>0$ and $a>0$,
\begin{eqnarray}
\{T_D(a)\in dt, T_U(a)>t\}&=&\{\rho(a)\in dt, x-a<X_t<x\},
\end{eqnarray}
where $\rho(a)$ is the first hitting time of the range process
\begin{eqnarray}
\rho(a)&=&\inf\{t>0|R_t=a\},
\end{eqnarray}
with
\begin{eqnarray}
R_t&:=&\sup_{s\le t}X_s-\inf_{s\le t}X_s~=~DU_t+DD_t.
\end{eqnarray}
So it suffices to determine
\begin{eqnarray*}
E_x\left[ e^{-\lambda t}\cdot 1_{\left\{T_D(a)\in dt,T_U(a)>t,
X_t=u\right\}}\right],
\end{eqnarray*}
for all $x-a<u<x$. For this purpose observe that for any $t>0, a>0$
and $x-a<u<x$,
\begin{eqnarray*}
\{T_D(a)\in dt, T_U(a)>t, X_t=u\}=\{\tau_u\in dt, \sup_{s\le
t}X_s=u+a\},
\end{eqnarray*}
which suggests that for any $\lambda>0$,
\begin{eqnarray*}
&~&E_x\left[e^{-\lambda T_D(a)}\cdot
1_{\{T_D(a)<T_U(a),X_{T_D(a)}=u\}}\right]\\&=&\int_0^\infty
e^{-\lambda t}\cdot E_x\left[1_{\{T_D(a)\in dt, T_U(a)>t,
X_t=u\}}\right]\\&=&\int_0^\infty e^{-\lambda t}\cdot
E_x\left[1_{\{\tau_u\in dt,
\sup_{s\le t}X_s=u+a\}}\right]\\
&=&\int_0^\infty e^{-\lambda t}\cdot\frac{\partial}{\partial a}E_x\left[1_{\{\tau_u\in dt, \sup_{s\le t}X_s< u+a\}}\right]\\
&=&\int_0^\infty e^{-\lambda t}\cdot\frac{\partial}{\partial a}E_x\left[1_{\{\tau_u=dt, \tau_{u+a}>t\}}\right]\\
&=&\frac{\partial}{\partial
a}E_x\left[e^{-\lambda\tau_u}\cdot1_{\{\tau_{u}<\tau_{u+a}\}}\right].
\end{eqnarray*}
From Lemma 1 it follows that
\begin{eqnarray}\label{thm14}
E_x\left[e^{-\lambda T_D(a)}\cdot
1_{\{T_D(a)<T_U(a),X_{T_D(a)}=u\}}\right]&=&\frac{\partial}{\partial
a}\ell^X(u,u+a;x,\lambda).
\end{eqnarray}
Then integration of the above identity over the interval $(x-a,a)$
in $u$ yields (\ref{lequal}) and completes the proof of the
theorem.
\end{proof}

\subsection{The case of $a>b>0$}
We determine $L_x^X(\lambda; a,b)$ for $a>b>0$ in this paragraph. To prove the
main result we need following proposition.
\begin{proposition}
For $b>0$, $c<u$ satisfying $c,u+b\in I$, and $\lambda>0$, define
\begin{eqnarray}\label{hlap}
H_u^X(\lambda;b,c)&:=&E_u\left[e^{-\lambda\tau_{c}}\cdot
1_{\{\tau_c<T_U(b)\}}\right].
\end{eqnarray}
Then
\begin{eqnarray}
H_u^X(\lambda;b,c)&=&\exp\left[\int_c^u\left.\frac{\partial}{\partial
w}\right|_{w=v}\ell^X(v,v+b;w,\lambda)dv\right].
\end{eqnarray}
\end{proposition}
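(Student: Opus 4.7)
The plan is to derive a first-order ODE for $\log H_u^X(\lambda; b, c)$ via the strong Markov property, and then to identify its coefficient with the $w$-derivative appearing in the statement. Writing $F(u) := H_u^X(\lambda; b, c)$ for fixed $\lambda, b, c$, I first claim that for $c \le u-\epsilon < u$,
\begin{equation*}
F(u) \;=\; E_u\!\left[e^{-\lambda\tau_{u-\epsilon}}\,1_{\{\tau_{u-\epsilon}<T_U(b)\}}\right]\cdot F(u-\epsilon).
\end{equation*}
This multiplicative decomposition follows from the strong Markov property applied at $\tau_{u-\epsilon}$, once one observes that on the event $\{\tau_{u-\epsilon}<T_U(b)\}$ the running infimum of $X$ at time $\tau_{u-\epsilon}$ equals $u-\epsilon$ (by continuity and the definition of the first hitting time from above); consequently, the post-$\tau_{u-\epsilon}$ drawup process coincides with the drawup of the shifted diffusion started afresh at $u-\epsilon$.

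The crux of the proof is the asymptotic
\begin{equation*}
\phi(u,u-\epsilon) \;:=\; E_u\!\left[e^{-\lambda\tau_{u-\epsilon}}\,1_{\{\tau_{u-\epsilon}<T_U(b)\}}\right] \;=\; \ell^X(u-\epsilon,\,u-\epsilon+b;\,u,\,\lambda) + O(\epsilon^2).
\end{equation*}
The inclusion $\{\tau_{u-\epsilon}<\tau_{u-\epsilon+b}\}\subseteq\{\tau_{u-\epsilon}<T_U(b)\}$ is immediate, because a path confined to $(u-\epsilon,\,u-\epsilon+b)$ until hitting $u-\epsilon$ has drawup strictly less than the range $b$. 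The symmetric difference consists of paths that hit $u-\epsilon+b$ before $u-\epsilon$, leaving the drawup within $O(\epsilon)$ of $b$, and then still manage to return to $u-\epsilon$ before the drawup completes. Decomposing at $\tau_{u-\epsilon+b}$ and using the scale-function machinery behind Lemma 1, the first leg has probability $P_u(\tau_{u-\epsilon+b}<\tau_{u-\epsilon})=O(\epsilon)$; conditional on it and on the value $m\in(u-\epsilon,u]$ of the infimum at that time, the second leg is constrained to reach $u-\epsilon$ before the updated drawup barrier $m+b$, and $P_{u-\epsilon+b}(\tau_{u-\epsilon}<\tau_{m+b})=O(\epsilon)$ uniformly in $m$ because $u-\epsilon+b$ lies within distance $m-u+\epsilon\le\epsilon$ of $m+b$. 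The product yields the $O(\epsilon^2)$ bound on the probability of the extra event, and the factor $e^{-\lambda\tau}\le1$ preserves it in the expectation.

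Taking logarithms in the strong Markov identity, using $\log(1+x)=x+O(x^2)$ with $\phi-1=O(\epsilon)$, and substituting the approximation above gives
\begin{equation*}
\log F(u)-\log F(u-\epsilon) \;=\; \ell^X(u-\epsilon,\,u-\epsilon+b;\,u,\,\lambda) - 1 + O(\epsilon^2).
\end{equation*}
Because $\ell^X(v,v+b;v,\lambda)=1$ (as $\tau_v=0$ when started at $v$), the right-hand side equals $\epsilon\cdot\frac{\partial}{\partial w}\ell^X(u-\epsilon,u-\epsilon+b;w,\lambda)\big|_{w=u-\epsilon}+o(\epsilon)$, smoothness of $\ell^X$ in $w$ being visible from the closed form \eqref{ell}. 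Dividing by $\epsilon$ and sending $\epsilon\to0^+$ identifies $(\log F)'(u)$ with the integrand appearing in the statement, and the boundary value $F(c)=1$ (since $\tau_c=0$ under $P_c$) then yields the claimed exponential representation upon integration from $c$ to $u$.

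The main technical obstacle is securing the $O(\epsilon^2)$ bound on $\phi-\ell^X$ rather than the easier $O(\epsilon)$: any linear slack would contaminate the derivative limit. The second-order smallness comes from the fact that the ``extra'' paths must both ascend and descend a distance of order $b$ through a corridor of width $\epsilon$, each event rare at rate $O(\epsilon)$; the two factors combine multiplicatively to give the quadratic term. Making this rigorous uniformly in the value of the running infimum requires a careful two-step application of the scale-function formula that already underlies Lemma 1.
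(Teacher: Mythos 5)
Your argument is correct and follows essentially the same route as the paper: both rest on the strong Markov factorization of $H_u^X$ into one-step factors, the identification of each factor with $\ell^X(v,v+b;w,\lambda)$, and passage to a logarithm, the only formal difference being that the paper telescopes over a partition of $[c,u]$ and takes a Riemann-sum limit while you extract a single infinitesimal step and integrate the resulting ODE. The one substantive addition is your $O(\epsilon^2)$ estimate for the discrepancy between $\{\tau_{u-\epsilon}<T_U(b)\}$ and $\{\tau_{u-\epsilon}<\tau_{u-\epsilon+b}\}$ (via the two-leg scale-function bound), which justifies the convergence of the discrete approximation to $H_u^X$ -- a point the paper asserts but does not prove.
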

\begin{proof}
We follow the idea of \cite{HadjVallStam09}, \cite{JPL77}, , and partition the interval
$[c,u]$ into $k$ subintervals $\{[v_{i},v_{i-1}]; 1\le i\le k\}$ with
$u=v_0>v_1>\ldots>v_k=c$. Let $\Delta_k=\max_{1\le i\le
k}(v_{i-1}-v_{i})$ and assume $\Delta_k\to0$ as $k\to\infty$. As a
discrete approximation to $H_u^{X}(\lambda;b,c)$ defined by
(\ref{hlap}), compute
\begin{eqnarray*}
&~&E_u\left[e^{-\lambda\sum_{i=1}^k(\tau_{v_i}-\tau_{v_{i-1}})}\cdot1_{\left\{\textrm{after}~\tau_{v_{i-1}}, X_t~\textrm{hits}~ v_i~\textrm{before increasing to}~ v_i+b, 1\le i\le k\right\}}\right]\\
&=&\prod_{i=1}^k
E_{v_{i-1}}\left[e^{-\lambda\tau_{v_i}}\cdot1_{\{\tau_{v_i}<\tau_{v_i+b}\}}\right],
\end{eqnarray*}
where the last equality follows from the strong Markov property.

It will be shown that as $k\to\infty$ and $\Delta_k\to0$, the limit of above expression exists and does not depend on
the particular sequence of partition chosen and hence is equal to $H_u^X(\lambda;b,c)$. \\
\noindent By Lemma 1,
\begin{eqnarray*}
\prod_{i=1}^k
E_{v_{i-1}}\left[e^{-\lambda\tau_{v_i}}\cdot1_{\left\{\tau_{v_i}<\tau_{v_{i}+b}\right\}}\right]
&=&\prod_{i=1}^k \ell^X(v_i,v_i+b;v_{i-1},\lambda).
\end{eqnarray*}
Taking log gives us
\begin{eqnarray*}
&~&\sum_{i=1}^k\log\left[\ell^X(v_i,v_i+b;v_{i-1},\lambda)\right]\\
&=&\sum_{i=1}^k\log\left[\ell^X(v_i,v_i+b;v_i,\lambda)+\ell^X(v_i,v_i+b;v_{i-1},\lambda)-\ell^X(v_i,v_i+b;v_{i},\lambda)\right]\\
&=&\sum_{i=1}^k\log\left[1+\ell^X(v_i,v_i+b;v_{i-1},\lambda)-\ell^X(v_i,v_i+b;v_{i},\lambda)\right]\\
&=&\sum_{i=1}^k\left.\frac{\partial}{\partial w}\right|_{w=v_i}\ell^X(v_i,v_i+b;w,\lambda)\cdot(v_{i-1}-v_{i})+O(\Delta_k)\\
&\to&\int_c^u\left.\frac{\partial}{\partial
w}\right|_{w=v}\ell^X(v,v+b;w,\lambda)dv,\qquad\textrm{as $\Delta_k\to 0^+$},
\end{eqnarray*}
from which we obtain
\begin{eqnarray*}
H_u^{X}(\lambda;b,c)&=&\exp\left[\int_c^u\left.\frac{\partial}{\partial
w}\right|_{w=v}\ell^X(v,v+b;w,\lambda)dv\right].
\end{eqnarray*}
This completes the proof of Proposition 1.
\end{proof}
Now let us state and prove the main result in this
paragraph.
\begin{theorem}\label{thm2} For $a>b>0$ and $\lambda>0$, we have
\begin{eqnarray}\label{lapage}~~L_x^X(\lambda;a,b)&=&\int_{x-b}^{x}\frac{\partial}{\partial
b}\ell^X(u,u+b;x,\lambda)\cdot H_u^X(\lambda;b,u-a+b)du.
\end{eqnarray}
\end{theorem}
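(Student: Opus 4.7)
My plan is to decompose the path at the intermediate stopping time $T_D(b)$ and apply the strong Markov property, so that the two factors in (\ref{lapage}) correspond to the law of the path up to $T_D(b)$ (via identity (\ref{thm14}) established inside the proof of Theorem \ref{thm1}) and to the law of the path after $T_D(b)$ (via Proposition 1). Since $a>b$, the inequality $T_D(a) < T_U(b)$ forces $T_D(b) \le T_D(a) < T_U(b)$, so $T_D(b)$ is finite and strictly precedes $T_U(b)$ on the event of interest.

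The crucial geometric observation, which I would establish first, is that on $\{T_D(b) < T_U(b),\, X_{T_D(b)} = u\}$ the running minimum $\inf_{s\le T_D(b)}X_s$ equals $u$. I would prove this by contradiction: suppose the minimum is attained at some $s_0 \le T_D(b)$ with $X_{s_0} < u$, and let $\tau^\ast = \inf\{s: X_s = u+b\}$, which is $\le T_D(b)$ since the maximum at $T_D(b)$ equals $u+b$. If $s_0 \ge \tau^\ast$, the drawdown at $s_0$ is at least $(u+b)-X_{s_0} > b$, contradicting the definition of $T_D(b)$. If $s_0 < \tau^\ast$, the drawup at $\tau^\ast$ is at least $(u+b)-X_{s_0} > b$, contradicting $T_U(b) > T_D(b)$.

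Setting $X'_t := X_{T_D(b)+t}$, which by the strong Markov property starts freshly at $u$, the identification of the pre-$T_D(b)$ minimum with $u$ implies that the overall infimum up to $T_D(b)+t$ equals $\inf_{s\le t}X'_s$ for every $t \ge 0$; in particular, the overall drawup after $T_D(b)$ coincides with the drawup $DU'$ of $X'$. On $\{T_D(a) < T_U(b)\}$ the process $X'$ moreover never reaches $u+b$ during $[0, T_D(a)-T_D(b))$: if it did at some time $s$, then $DU'_s \ge b$ (since $\inf X' \le u$), which would force $T_U(b) \le T_D(b)+s < T_D(a)$, an absurdity. Hence the overall maximum stays at $u+b$ on $[T_D(b), T_D(a)]$, the event factorizes as
$$\{T_D(a) < T_U(b),\, X_{T_D(b)} \in du\} = \{T_D(b) < T_U(b),\, X_{T_D(b)} \in du\} \cap \{\tau_{u-a+b}(X') < T_U'(b)\},$$
and $T_D(a) = T_D(b) + \tau_{u-a+b}(X')$ on this event, where $T_U'(b)$ denotes the drawup time of $X'$.

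Applying the strong Markov property at $T_D(b)$, identity (\ref{thm14}) for the pre-$T_D(b)$ factor, and the definition (\ref{hlap}) for the post-$T_D(b)$ factor yields
$$E_x\!\left[e^{-\lambda T_D(a)}\,1_{\{T_D(a) < T_U(b),\, X_{T_D(b)} \in du\}}\right] = \frac{\partial}{\partial b}\ell^X(u,u+b;x,\lambda)\cdot H_u^X(\lambda;b,u-a+b)\,du,$$
and integrating $u$ over $(x-b, x)$ produces (\ref{lapage}). The main obstacle will be the geometric claim $\inf_{s\le T_D(b)} X_s = u$, because without it the post-$T_D(b)$ drawup cannot be identified with the freshly restarted drawup $DU'$, and the above factorization of events breaks down.
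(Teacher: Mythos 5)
Your proposal is correct and follows essentially the same route as the paper: decompose at $T_D(b)$, identify the pre-$T_D(b)$ factor via (\ref{thm14}) and the post-$T_D(b)$ factor via Proposition 1, and integrate over $u\in(x-b,x)$. The only difference is that you explicitly prove the geometric facts (that $\inf_{s\le T_D(b)}X_s=u$ on the relevant event, so the restarted drawup coincides with the original one) which the paper treats as evident in its path-decomposition step; this is a welcome tightening rather than a new argument.
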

\begin{proof}
Any path in the event $\{T_D(a)<T_U(b)\}$ has the decomposition
\begin{enumerate}
\item $\{X_t; 0\le t\le T_D(b)\}$;
\item $\{X_{t+T_D(b)};0\le t\le T_D(a)-T_D(b)\}$.
\end{enumerate}
Conditioning on $X_{T_D(b)}=u$, the process in $2.$ starts at $u$, and decreases to $u-a+b$ before it incurs the drawup of $b$ units occurs. This gives rise to the representation
\begin{eqnarray}
T_D(a)=T_D(b)+\tau_{u-a+b}\circ\theta(T_D(b)).
\end{eqnarray}
Therefore, for $x-b<u<x$,
\begin{eqnarray}\label{311}
\nonumber&~&e^{-\lambda T_D(a)}\cdot 1_{\left\{T_D(a)<T_U(b),
X_{T_D(b)}=u\right\}}\\
\nonumber&=&e^{-\lambda
[T_D(b)+\tau_{u-a+b}\cdot\theta(T_D(b))]}\cdot
1_{\left\{T_D(a)<T_U(b), X_{T_D(b)}=u\right\}}\\
\nonumber&=&\underbrace{e^{-\lambda T_D(b)}\cdot 1_{\{T_D(b)<T_U(b),
X_{T_D(b)}=u\}}}_{\textrm{before}~T_D(b)}\\
&~&\times\underbrace{e^{-\lambda\tau_{u-a+b}\circ\theta(T_D(b))}\cdot1_{\left\{\tau_{u-a+b}\circ\theta(T_D(b))<T_U(b)\circ\theta(T_D(b))\right\}}}_{\textrm{after}~T_D(b)}.
\end{eqnarray}
To get the expectation of the above expression under $E_x[\cdot]$,
we first compute its conditional expectation under
$E_u[\cdot|\mathcal{F}_{T_D(b)}]$. By the strong Markov property, the
factor ``before $T_D(b)$'' is deterministic under
$E_u[\cdot|\mathcal{F}_{T_D(b)}]$, and the factor ``after $T_D(b)$''
has conditional expectation
\begin{eqnarray*}
&~&E_u\left[e^{-\lambda\tau_{u-a+b}\circ\theta(T_D(b))}\cdot1_{\{\tau_{u-a+b}\circ\theta(T_D(b))<T_U(b)\circ\theta(T_D(b))\}}\left.\right|\mathcal{F}_{T_D(b)}\right]\\
&=&E_u\left[e^{-\lambda\tau_{u-a+b}}\cdot1_{\{\tau_{u-a+b}<T_U(b)\}}\right],
\end{eqnarray*}
which, by Proposition 1, is equal to $H_u^X(\lambda; b,u-a+b)$.
Taking the expectation of (\ref{311}) under $E_x[\cdot]$, and using (\ref{thm14}), we obtain
\begin{eqnarray}\label{thm22}
\nonumber&~&E_x\left[e^{-\lambda
T_D(a)}\cdot1_{\left\{T_D(a)<T_U(b),
X_{T_D(b)}=u\right\}}\right]\\
\nonumber&=&E_x\left[e^{-\lambda T_D(b)}\cdot 1_{\{T_D(b)<T_U(b),
X_{T_D(b)}=u\}}\right]\cdot H_u^X(\lambda;b,u-a+b)\\
&=&\frac{\partial}{\partial b}\ell^X(u,u+b;x,\lambda)\cdot
H_u^X(\lambda;b,u-a+b).
\end{eqnarray}
The integration of the above identity over
the interval $(x-b,x)$ in $u$ yields (\ref{lapage}) and completes the
proof of Theorem \ref{thm2}.
\end{proof}

\subsection{The case of $b>a>0$}
We determine $L_x^X(\lambda;a,b)$ for $b>a>0$ in this paragraph. To prove the
main result we need the following proposition.
\begin{proposition}
For any $a>0$ and $x\in I$ satisfying $x-a\in I$, and $\lambda>0$, define
\begin{eqnarray}
J_x^X(\lambda;a):=E_x\left[e^{-\lambda T_D(a)}\right].
\end{eqnarray}
Then
\begin{align*}
 &J_x^X(\lambda;a)\\
=&-\int_x^\infty 1_{I}(u)\cdot\left.\frac{\partial}{\partial w}\right|_{w=u}\ell^{X}(u-a,u;w,\lambda)\cdot e^{-\int_x^u\left.\frac{\partial}{\partial w}\right|_{w=v}\ell^X(v,v-a;w,\lambda)dv}du.
\end{align*}
\end{proposition}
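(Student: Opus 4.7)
The plan is to extend the partition/strong-Markov technique from the proof of Proposition 1, now decomposing the path up to $T_D(a)$ according to the value of the running maximum $M_{T_D(a)}:=\sup_{s\le T_D(a)}X_s$. On $\{T_D(a)<\infty\}$ this maximum takes values in $[x,\infty)\cap I$ (with $X_{T_D(a)}=M_{T_D(a)}-a$), and for a fine partition $x=u_0<u_1<\cdots<u_N$ the event $\{M_{T_D(a)}\in(u_{j-1},u_j]\}$ admits the following description: the running maximum must advance monotonically through $u_0,u_1,\ldots,u_{j-1}$ without ever losing $a$ units, and then from $u_{j-1}$ the process must hit $u_{j-1}-a$ before reaching $u_j$.

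Applying the strong Markov property at the successive first-hitting times $\tau_{u_1},\ldots,\tau_{u_{j-1}}$ and invoking Lemma 1, the discounted contribution to $J_x^X(\lambda;a)$ from this event is approximately
\begin{eqnarray*}
\left[\prod_{i=1}^{j-1}\ell^X(u_i,u_{i-1}-a;u_{i-1},\lambda)\right]\cdot\ell^X(u_{j-1}-a,u_j;u_{j-1},\lambda).
\end{eqnarray*}
As $\Delta_N:=\max_i(u_i-u_{i-1})\to0$, an argument parallel to the one in the proof of Proposition 1 (using $\ell^X(v,v-a;v,\lambda)=1$ and the Taylor expansion of $\log(1+\,\cdot\,)$) shows that the logarithm of the bracketed ``survival'' product converges to $-\int_x^{u_{j-1}}\left.\partial_w\ell^X(v,v-a;w,\lambda)\right|_{w=v}dv$.

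For the final ``downcrossing'' factor one uses $\ell^X(u-a,u;u,\lambda)=0$ and Taylor-expands in the upper barrier $u_j$ about $u_{j-1}$, obtaining
\begin{eqnarray*}
\ell^X(u_{j-1}-a,u_j;u_{j-1},\lambda)\sim -\left.\partial_w\ell^X(u_{j-1}-a,u_{j-1};w,\lambda)\right|_{w=u_{j-1}}\cdot(u_j-u_{j-1}).
\end{eqnarray*}
The identification of the derivative in the upper barrier with $-\partial_w\ell^X$ evaluated at the starting point is a short direct check: both quantities reduce, via formula (\ref{ell}), to the Wronskian-type ratio $\pm(g(u)h'(u)-g'(u)h(u))/(g(u-a)h(u)-g(u)h(u-a))$. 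Summing over $j$ and letting the mesh tend to zero produces a Riemann sum whose limit is the claimed integral representation; the factor $1_I(u)$ simply rules out values of $u$ that the diffusion cannot reach.

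The main obstacle will be the rigorous passage to the limit in the Riemann-sum argument: because the product runs over roughly $1/\Delta_N$ factors, one must control the $O(\Delta_N^2)$ Taylor remainders uniformly in the partition so that the cumulative error vanishes, and one must verify the integrability of the resulting integrand up to the boundary of $I$. Beyond this, the only delicate symbolic step is the sign bookkeeping that turns the upper-barrier derivative into $-\partial_w\ell^X$, which is immediate from the explicit ODE structure of the fundamental solutions $g$ and $h$.
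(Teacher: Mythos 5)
Your proposal is correct and is essentially the paper's own route: the paper does not prove this proposition but simply cites Lehoczky (1977), page 602, whose argument is exactly the partition/strong-Markov decomposition over the terminal value of the running maximum that you reconstruct, and it is the same technique the paper itself spells out in proving Proposition 1 (including the $\log(1+\cdot)$ expansion for the survival product and the first-order expansion of the vanishing factor $\ell^X(u-a,u;u,\lambda)=0$). Your sign check identifying the upper-barrier derivative with $-\partial_w\ell^X$ via the Wronskian ratio is the one step Lehoczky's formula leaves implicit, and your handling of it is right; the unresolved $O(\Delta_N^2)$ remainder control you flag is treated at exactly the same level of informality in the paper's proof of Proposition 1.
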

\begin{proof}
See \cite{JPL77}, page 602.
\end{proof}

Now let us state and prove the main result in this paragraph.
\begin{theorem}\label{thm3}
For $b>a>0$ and $\lambda>0$, we have
\begin{eqnarray}\label{thm3f}
\nonumber ~~L_x^X(\lambda;a,b)&=&J_x^{X}(\lambda;a)-\int_{x}^{x+a}dv\frac{\partial}{\partial
a}\ell^{2x-X}(2x-v,2x-v+a;x,\lambda)\\
&~&\times H_{2x-v}^{2x-X}(\lambda;a,2x-v-b+a)\cdot
J_{v+b-a}^X(\lambda;a).
\end{eqnarray}
\end{theorem}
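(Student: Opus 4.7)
The plan is to split
\[
J_x^X(\lambda;a)=E_x[e^{-\lambda T_D(a)}]=L_x^X(\lambda;a,b)+E_x[e^{-\lambda T_D(a)}1_{\{T_U(b)<T_D(a)\}}]
\]
(the diagonal event $T_D(a)=T_U(b)$ has probability zero for our diffusion) and to compute the second term by a double application of the strong Markov property at the stopping times $T_U(a)$ and $T_U(b)$. Since $a<b$, on the event $\{T_U(b)<T_D(a)\}$ one automatically has $T_U(a)<T_U(b)<T_D(a)$, so this decomposition is meaningful.

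First I would condition on $X_{T_U(a)}=v$ and use the following path-geometry observation: on $\{T_U(a)<T_D(a),\,X_{T_U(a)}=v\}$ the running minimum of $X$ at $T_U(a)$ equals $v-a$ (by definition of $T_U(a)$) and the running maximum equals $v$, since otherwise the drawdown would already have reached $a$ during the ascent from $v-a$ to $v$. This forces $v\in(x,x+a)$, and the strong Markov property at $T_U(a)$ identifies the remaining problem with one for a fresh diffusion $\tilde X$ started at $v$. On $\{T_U(b)<T_D(a)\}$, $\tilde X$ must stay above $v-a$ up to its first passage $\tau^{\tilde X}_{v+b-a}$ to the level $v+b-a$ (else the original drawdown already hits $a$), so the original minimum does not update, $T_U(b)-T_U(a)=\tau^{\tilde X}_{v+b-a}$, and the event reduces to $\{\tau^{\tilde X}_{v+b-a}<T_D^{\tilde X}(a)\}$.

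A second strong Markov application at $T_U(b)$ uses the analogous fact that $\tilde X_t<v+b-a$ strictly before $\tau^{\tilde X}_{v+b-a}$, so the running maximum at $T_U(b)$ equals $X_{T_U(b)}=v+b-a$. Consequently the post-$T_U(b)$ drawdown of $X$ coincides with that of a further fresh diffusion started at $v+b-a$. Combining the two splits yields
\begin{align*}
&E_x[e^{-\lambda T_D(a)}1_{\{T_U(b)<T_D(a),\,X_{T_U(a)}\in dv\}}]\\
&\quad=E_x[e^{-\lambda T_U(a)}1_{\{T_U(a)<T_D(a),\,X_{T_U(a)}\in dv\}}]\cdot E_v[e^{-\lambda\tau^{\tilde X}_{v+b-a}}1_{\{\tau^{\tilde X}_{v+b-a}<T_D^{\tilde X}(a)\}}]\cdot J_{v+b-a}^X(\lambda;a).
\end{align*}

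To match (\ref{thm3f}), I would introduce the reflected diffusion $Y:=2x-X$, under which $T_U^X=T_D^Y$, $T_D^X=T_U^Y$, and $\{X_t=v\}=\{Y_t=2x-v\}$. Applying (\ref{thm14}) to $Y$ rewrites the first factor as $\frac{\partial}{\partial a}\ell^Y(2x-v,2x-v+a;x,\lambda)\,dv$, and applying Proposition 1 to $Y$ rewrites the second factor as $H_{2x-v}^Y(\lambda;a,2x-v-b+a)$. Integrating in $v$ over $(x,x+a)$ and subtracting from $J_x^X(\lambda;a)$ yields (\ref{thm3f}). The main obstacle is the pair of path-geometry claims---that the running maximum equals $X_{T_U(a)}$ at $T_U(a)$ on $\{T_U(a)<T_D(a)\}$, and likewise at $T_U(b)$ on $\{T_U(b)<T_D(a)\}$---because without this \emph{freshness} the strong Markov splits would not produce factors with the clean initial points $v$ and $v+b-a$ appearing in the theorem.
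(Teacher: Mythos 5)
Your proposal is correct and follows essentially the same route as the paper: reduce to the complementary event $\{T_U(b)<T_D(a)\}$, decompose the path via the strong Markov property, and pass to the reflected process $2x-X$ so as to reuse the machinery developed for the case $a>b$. The only difference is cosmetic --- you split explicitly at both $T_U(a)$ and $T_U(b)$ and invoke (\ref{thm14}) and Proposition 1 separately on the reflected process, whereas the paper splits only at $T_U(b)$ and cites the already-assembled identity (\ref{thm22}); your explicit verification of the path geometry (that the running maximum equals the current value at $T_U(a)$ and again at $T_U(b)$, so the post-split pieces start ``fresh'' at $v$ and $v+b-a$) is a point the paper leaves implicit.
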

\begin{proof}
First, it is easily seen that for $b\ge a>0$,
\begin{eqnarray}
\nonumber L_x^X(\lambda;a,b)&=&J_x^X(\lambda;a)-E_x\left[e^{-\lambda
T_D(a)}\cdot1_{\left\{\sup_{s\le T_D(a)}DU_s\ge b\right\}}\right].
\end{eqnarray}
Therefore, to prove (\ref{thm3f}), it suffices to show that
\begin{eqnarray}\label{largedu}
\nonumber E_x\left[e^{-\lambda
T_D(a)}\cdot1_{\left\{\sup_{s\le T_D(a)}DU_s\ge b\right\}}\right]
=\int_{x}^{x+a}dv\frac{\partial}{\partial
a}\ell^{2x-X}(2x-v,2x-v+a;x,\lambda)\\
\times H_{2x-v}^{2x-X}(\lambda;a,2x-v-b+a)\cdot J_{v+b-a}^X(\lambda;a).~
\end{eqnarray}
To do so we observe that for $b\ge a$,
\begin{eqnarray}\label{27}
~~~~\qquad~~ E_x\left[e^{-\lambda T_D(a)}\cdot1_{\{\sup_{t\le T_D(a)}DU_t\ge b\}}\right]=E_x\left[e^{-\lambda T_D(a)}\cdot1_{\{T_U(b)<T_D(a)\}}\right].
\end{eqnarray}
And we are going to calculate the right hand side of (\ref{27}) in the sequel.

Consider the path decomposition for any path in the event $\{T_U(b)<T_D(a)\}$. We have
\begin{enumerate}
\item $\{X_t; 0\le t\le T_U(b)\}$;
\item $\{X_{t+T_U(b)}; 0\le t\le T_D(a)-T_U(b)\}$.
\end{enumerate}
Intuitively, before time $T_U(b)$, the process experiences no drawdown of $a$
units and the first drawup of $b$ units occurs at
$T_U(b)$, when the process also reaches a new maximum; thereafter, the process has a drawdown of $a$ units at time $T_D(a)$. Thus for any path in the event $\{T_U(b)<T_D(a)\}$ we have
\begin{eqnarray} T_D(a)=T_U(b)+T_D(a)\circ\theta(T_U(b)).
\end{eqnarray}
Therefore, for $b\ge a$ and $x<v<x+a$,
\begin{eqnarray}\label{thm31}
\qquad~~~~ &~&E_x\left[e^{-\lambda T_D(a)}\cdot 1_{\{
\sup_{t\le T_D(a)}DU_t\ge b, X_{T_U(a)}=v\}}\right]\\
\nonumber&=&E_x\left[e^{-\lambda
T_U(b)+T_D(a)\circ\theta(T_U(b))}\cdot 1_{\{ \sup_{t\le
T_D(a)} DU_t\ge b,
X_{T_U(a)}=v\}}\right]\\
\nonumber&=&E_x\left[\underbrace{e^{-\lambda T_U(b)}\cdot
1_{\{T_U(b)<T_D(a), X_{T_U(a)}=v\}}}_{\textrm{before}~T_U(b)}\times
\underbrace{e^{-\lambda T_D(a)\circ\theta(T_U(b))}}_{\textrm{after}~T_U(b)}\right]\\
\nonumber&=&E_x\left[e^{-\lambda T_U(b)}\cdot 1_{\{T_U(b)<T_D(a),
X_{T_U(a)}=v\}}E_{v+b-a}\left[e^{-\lambda T_D(a)}\right]\right]\\
\nonumber&=&E_x\left[e^{-\lambda T_U(b)}\cdot 1_{\{T_U(b)<T_D(a),
X_{T_U(a)}=v\}}\cdot J_{v+b-a}^X(\lambda;a)\right]\\
\nonumber&=&E_x\left[e^{-\lambda T_U(b)}\cdot 1_{\{T_U(b)<T_D(a),
X_{T_U(a)}=v\}}\right]\cdot J_{v+b-a}^X(\lambda;a),
\end{eqnarray}
where the third equality follows from the strong Markov property.
The expectation in the last line can be determined as follows.
Note that for the process $\{Y_t=2x-X_t, t\ge 0\}$,
\begin{eqnarray*}
dY_t=-\mu(2x-Y_t)dt+\sigma(2x-Y_t)dB_t^{'},~Y_0=x
\end{eqnarray*}
with $B_t^{'}=-B_t$, the vector of random variables $\left(T_D^Y(b), T_U^Y(a), 2x-Y_{T_D^Y(a)}\right)$\footnote{$T_D^Y(b)$ and $T_U^Y(a)$ are the drawdown and drawup of the process $\{Y_t; t\ge 0\}$ respectively.}
has the same law as the vector of random variables $\left(T_D(a), T_U(b), X_{T_U(a)}\right)$ for
$\{X_t; t\ge 0\}$ under $E_x[\cdot]$. So we know from (\ref{thm22})
that
\begin{eqnarray}\label{thm32}
\quad&~&E_x\left[e^{-\lambda T_U(b)}\cdot 1_{\{T_U(b)<T_D(a),
X_{T_U(a)}=v\}}\right]~\textrm{for}~X_t\\
\nonumber &=&E_x\left[e^{-\lambda T_D^Y(b)}\cdot 1_{\{T_D^Y(b)<T_U^Y(a),
Y_{T_D^Y(a)}=2x-v\}}\right]~\textrm{for}~ Y_t\\
\nonumber&=&\frac{\partial}{\partial
a}\ell^{2x-X}(2x-v,2x-v+a;x,\lambda)\cdot
H_{2x-v}^{2x-X}(\lambda;a,2x-v-b+a).
\end{eqnarray}
The integration of (\ref{thm31}) over the interval
$(x,x+a)$ in $v$ yields (\ref{largedu}) and completes the proof of Theorem \ref{thm3}.
\end{proof}

We now proceed to treat separately the case of a drifted Brownian motion model.

\section{A study of Brownian motion}
In this section, we apply the results in Theorem \ref{thm1}, Theorem \ref{thm2} and
Theorem \ref{thm3} to a drifted Brownian motion model and calculate the probability density of the drawdown of $a$ units when it precedes the drawup of $b$ units.

First it is easily seen that $I=(-\infty,\infty)$  in a drifted Brownian motion model. The function $\ell^{X}(y,z;x,\lambda)$ for $X_t=x+\mu t+\sigma W_t$ can be found in \cite{BoroSalm02} page 295:
\begin{eqnarray}
\ell^X(y,z;x,\lambda)&=&\frac{\sinh[(z-x)S_{\mu,\sigma}(\lambda)]}{\sinh[(z-y)S_{\mu,\sigma}(\lambda)]}e^{\frac{\mu(y-x)}{\sigma^2}},
\end{eqnarray}
where $S_{\mu,\sigma}(\lambda)=\sqrt{(2\lambda/\sigma^2)+(\mu^2/\sigma^4)}$. Thus the Laplace transforms in Theorem \ref{thm1}, Theorem \ref{thm2} and Theorem \ref{thm3} can be calculated explicitly as:\\
1. $a=b>0$:
\begin{eqnarray}\label{a=b}
\nonumber L_0^X(\lambda;a)&=&\frac{S_{\mu,\sigma}(\lambda)}{(2\lambda/\sigma^2)}\left\{\frac{e^{-\frac{\mu a}{\sigma^2}}\left[S_{\mu,\sigma}(\lambda)\coth[aS_{\mu,\sigma}(\lambda)]+\frac{\mu}{\sigma}\right]}{\sinh[a S_{\mu,\sigma}(\lambda)]}\right.\\
&~&\left.-\frac{S_{\mu,\sigma}(\lambda)}{\sinh^2[aS_{\mu,\sigma}(\lambda)]}\right\};
\end{eqnarray}
\noindent2. $a>b>0$:
\begin{eqnarray}\label{a>b}
L_0^X(\lambda;a,b)=L_0^X(\lambda; b)\cdot\exp\left[T_{\mu,\sigma}(\lambda;b)(a-b)\right],
\end{eqnarray}
where
\begin{eqnarray}\label{exp}
T_{\mu,\sigma}(\lambda;b)=-\frac{\mu}{\sigma^2}-S_{\mu,\sigma}(\lambda)\coth[bS_{\mu,\sigma}(\lambda)];
\end{eqnarray}
\noindent3. $b>a>0$:
\begin{eqnarray}\label{b>a}
L_0^X(\lambda;a,b)=\left[1-L_0^{-X}(\lambda;a)\cdot e^{T_{-\mu,\sigma}(\lambda;a)(b-a)}\right]\cdot J_0^X(\lambda;a),
\end{eqnarray}
where
\begin{eqnarray}
\nonumber J_0^X(\lambda;a)&=&\frac{S_{\mu,\sigma}(\lambda) e^{-\frac{\mu a}{\sigma^2}}}{S_{\mu,\sigma}(\lambda)\cosh[aS_{\mu,\sigma}(\lambda)]-(\mu/\sigma^2)\sinh[a S_{\mu,\sigma}(\lambda)]}\\
&=&-\frac{S_{\mu,\sigma}(\lambda;a)e^{-\frac{\mu  a}{\sigma^2}}}{\sinh[aS_{\mu,\sigma}(\lambda;a)]}\cdot\frac{1}{T_{-\mu,\sigma}(\lambda;a)}.
\end{eqnarray}

One can easily obtain several known results from (\ref{a=b}), (\ref{a>b}) and (\ref{b>a}). First, by letting $\lambda\to0^+$, the formulas coincide with the probability results in Hadjiliadis \& Vecer \cite{HadjVece06}. Second, by letting $b\to\infty$ in (\ref{b>a}), one obtains the Laplace transform of $T_D(a)$, $J_0^X(\lambda;a)$.

Moreover, we can invert (\ref{a>b}) analytically to obtain the density $P(T_D(a)\in dt, T_U(b)>t)$ for any $a\ge b>0$.  In fact we have
\begin{theorem}\label{thm4}
Define $p^{(\mu)}(t;a,b)dt=P(T_D(a)\in dt, T_U(b)>t)$ for $a\ge b>0$, then
\begin{eqnarray}\label{thm4f}
\nonumber p^{(\mu)}(t;a,b)&=&\frac{2}{t}e^{-\frac{\mu^2t}{2\sigma^2}-\frac{\mu(a-b)}{\sigma^2}}\sum_{m,n=0}^{\infty}\frac{(m+n+1)!}{(m+1)!m!n!}\left[\frac{2(a-b)}{\sigma\sqrt{t}}\right]^m\\
\nonumber&~&\times\left[2F_{m,n}^{(1)}(t)-e^{-\frac{\mu b}{\sigma^2}}F_{m,n}^{(2)}(t)-\delta_{n}e^{-\frac{\mu b}{\sigma^2}}F_m^{(3)}(t)\right]\\
\nonumber&~&+\frac{2\mu^2}{\sigma^2}e^{-\frac{\mu(a-b)}{\sigma^2}}\sum_{m,n=0}^{\infty}\frac{(m+n+1)!}{(m+1)!m!n!}\left[\frac{2\mu(a-b)}{\sigma^2}\right]^m\times\\
\nonumber&~&\left[G_{m+n+\frac{1}{2}}^{(\mu)}(t)+(-1)^mG_{m+n+\frac{1}{2}}^{(-\mu)}(t)-e^{-\frac{\mu b}{\sigma^2}}G_{m+n+1}^{(\mu)}(t)\right.\\
&~&\left.-(-1)^me^{-\frac{\mu b}{\sigma^2}}G_{m+n}^{(-\mu)}(t)\right],\quad~
\end{eqnarray}
where $\delta_{n}$ is the Kronecker delta and
\begin{align*}
F_{m,n}^{(1)}(t)=&\sum_{k=0}^{\left\lfloor \frac{m+1}{2}\right\rfloor}\left[\frac{\mu\sqrt{t}}{\sigma}\right]^{2k}\phi^{(m+1-2k)}\left[\frac{(2m+2n+1)b+a}{\sigma\sqrt{t}}\right],\\
F_{m,n}^{(2)}(t)=&\sum_{k=0}^{m+1}\left[\frac{\mu\sqrt{t}}{\sigma}\right]^k\left[1+(-1)^k\frac{m+n+2}{n+1}\right]\phi^{(m+1-k)}\left[\frac{2(m+n+1)b+a}{\sigma\sqrt{t}}\right]\\
F_m^{(3)}(t)=&\sum_{k=0}^{m+1}\left[-\frac{\mu\sqrt{t}}{\sigma}\right]^k\phi^{(m+1-k)}\left[\frac{2mb+a}{\sigma\sqrt{t}}\right],\\
G_m^{(\mu)}(t)=&e^{\frac{\mu(2mb+a)}{\sigma^2}}\Phi\left[\frac{2mb+a+\mu t}{\sigma\sqrt{t}}\right],
\end{align*}
with $\phi$ and $\Phi$ being the standard normal probability density and cumulative distribution respectively. $\phi^{(k)}$ is the $k$-th derivative of $\phi$.
\end{theorem}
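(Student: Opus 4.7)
The plan is to invert the Laplace transform (\ref{a>b}) directly as a double sum, splitting contributions into those that yield derivatives of $\phi$ (the $F$-block of (\ref{thm4f})) and those that yield values of $\Phi$ (the $G$-block), guided by the simple identity $\sigma^{2}S^{2}/(2\lambda)=1+\mu^{2}/(2\lambda\sigma^{2})$. Writing $S:=S_{\mu,\sigma}(\lambda)$, factoring out $e^{-\mu(a-b)/\sigma^{2}}$ from $\exp[T_{\mu,\sigma}(\lambda;b)(a-b)]$, and using $\coth(bS)=1+2\sum_{n\ge 1}e^{-2nbS}$, we can write
\begin{equation*}
\exp\bigl[-(a-b)S\coth(bS)\bigr]=e^{-(a-b)S}\exp\!\Bigl[-2(a-b)S\sum_{n\ge 1}e^{-2nbS}\Bigr]
\end{equation*}
and Taylor-expand the right-hand factor in powers of $(a-b)$. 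The $e^{-(a-b)S}$ prefactor produces the additive shift by $a-b$ in the final arguments, so that after combining with $1/\sinh(bS)=2\sum_{p}e^{-(2p+1)bS}$ and $1/\sinh^{2}(bS)=4\sum_{p}(p+1)e^{-(2p+2)bS}$ coming from the explicit form (\ref{a=b}) of $L_{0}^{X}(\lambda;b)$, the three distances $(2m+2n+1)b+a$, $2(m+n+1)b+a$, $2mb+a$ appearing in (\ref{thm4f}) emerge naturally.

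Substituting (\ref{a=b}), and using $\sigma^{2}S^{2}/(2\lambda)=1+\mu^{2}/(2\lambda\sigma^{2})$ together with the partial fraction $\sigma^{2}S/(2\lambda)=\tfrac12[(S-\mu/\sigma^{2})^{-1}+(S+\mu/\sigma^{2})^{-1}]$, one rewrites
\begin{equation*}
L_{0}^{X}(\lambda;b)=\frac{e^{-\mu b/\sigma^{2}}\cosh(bS)-1}{\sinh^{2}(bS)}+\frac{\mu^{2}}{2\lambda\sigma^{2}}\cdot\frac{e^{-\mu b/\sigma^{2}}\cosh(bS)-1}{\sinh^{2}(bS)}+\frac{\mu e^{-\mu b/\sigma^{2}}S}{2\lambda\sinh(bS)},
\end{equation*}
which cleanly splits $L_{0}^{X}(\lambda;b)$ into a ``polynomial in $S$'' part (first term) and a ``$1/\lambda$'' part (last two terms). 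After multiplying by $\exp[T_{\mu,\sigma}(\lambda;b)(a-b)]$ and Taylor-expanding, the former will contribute the $F^{(1,2,3)}$ pieces and the latter the $G^{(\pm\mu)}$ pieces. The universal combinatorial weight $(m+n+1)!/[(m+1)!\,m!\,n!]=\binom{m+n+1}{n}/m!$ arises from the binomial series $(1-w)^{-(m+2)}=\sum_{n}\binom{m+n+1}{n}w^{n}$ applied to $1/\sinh^{2}(bS)$, together with the $1/m!$ from the Taylor expansion of the exponential; the Kronecker $\delta_{n}$ in front of $F^{(3)}_{m}$ isolates the $n=0$ boundary term coming from the expansion of $\coth^{m+1}(bS)/\sinh(bS)$.

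For the termwise Laplace inversion I rely on two building blocks. First,
\begin{equation*}
\mathcal{L}^{-1}\bigl[S^{k}e^{-cS}\bigr](t)=\frac{(-1)^{k+1}}{t\,(\sigma\sqrt{t})^{k}}e^{-\mu^{2}t/(2\sigma^{2})}\phi^{(k+1)}\!\Bigl(\tfrac{c}{\sigma\sqrt{t}}\Bigr),
\end{equation*}
obtained by iterated $c$-differentiation of the base identity $\mathcal{L}^{-1}[e^{-cS}](t)=-t^{-1}e^{-\mu^{2}t/(2\sigma^{2})}\phi'(c/(\sigma\sqrt{t}))$; the $(\mu\sqrt{t}/\sigma)^{k}$ prefactors inside $F^{(1,2,3)}_{m,n}$ arise after writing each $S^{j}$ as a polynomial in $\lambda$ via $S^{2}=2\lambda/\sigma^{2}+\mu^{2}/\sigma^{4}$, converting the resulting powers of $\lambda$ into $t$-derivatives of the base inverse, and collecting via Hermite-polynomial identities for $\phi^{(k)}$; the restriction to even $k$ in $F^{(1)}_{m,n}$ reflects the fact that only even powers of $S$ survive in the long-division polynomial attached to $-S/\sinh^{2}(bS)$. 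Second, for the ``$1/\lambda$'' terms one applies
\begin{equation*}
\mathcal{L}^{-1}\!\left[\tfrac{e^{-cS}}{\lambda}\right]\!(t)=e^{-c\mu/\sigma^{2}}\Phi\!\Bigl(\tfrac{\mu t-c}{\sigma\sqrt{t}}\Bigr)+e^{c\mu/\sigma^{2}}\Phi\!\Bigl(\tfrac{-\mu t-c}{\sigma\sqrt{t}}\Bigr),
\end{equation*}
the Laplace-inverted first-passage distribution of drifted Brownian motion; via $\Phi(-x)=1-\Phi(x)$ this produces $G^{(\mu)}_{k}$ and $G^{(-\mu)}_{k}$, and the $(-1)^{m}$ together with the paired $\pm\mu$ signs in (\ref{thm4f}) are direct signatures of the symmetric/antisymmetric combinations coming from the partial fraction of $\sigma^{2}S/(2\lambda)$.

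The main obstacle is the combinatorial bookkeeping: one must track three layers of expansion (the Taylor series in $(a-b)$, the geometric series from $\coth(bS)$ and $\sinh(bS)$, and the long-division rewriting of $S^{j}$) and verify that all cross terms collapse exactly to the stated coefficients, in particular the awkward weight $[1+(-1)^{k}(m+n+2)/(n+1)]$ inside $F^{(2)}_{m,n}$ and the appearance of the Kronecker $\delta_{n}$ in $F^{(3)}_{m}$. A useful consistency check is to match both the $\phi^{(k)}$ and $\Phi$ contributions against (\ref{thm4f}) for small values of $m$ and $n$ (e.g.\ $m=0,1$ and $n=0,1$), verify that letting $\lambda\to 0^{+}$ reproduces the probability of $\{T_{D}(a)<T_{U}(b)\}$ from \cite{HadjVece06}, and then argue the general expression by induction on $m$.
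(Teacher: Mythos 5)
Your route is genuinely different from the paper's. The paper does not expand $L_0^X(\lambda;b)$ at all: it keeps the representation coming from Theorem \ref{thm1}, writes $L_0^X(\lambda;a,b)$ as the integral over $u\in(-b,0)$ of $e^{\mu u/\sigma^2}\,S\sinh(-uS)\sinh^{-2}(bS)\exp[T_{\mu,\sigma}(\lambda;b)(a-b)]$, inverts that integrand for \emph{fixed} $u$ by quoting a tabulated inversion (the $es$-function, first formula on p.~643 of \cite{BoroSalm02}), which yields a double series in $\phi^{(m+2)}$, and only then integrates in $u$; the $\Phi$-block of (\ref{thm4f}) and the $(\mu\sqrt t/\sigma)^k$ weights inside $F^{(1,2,3)}$ all emerge from that final $u$-integration (repeated integration by parts against the weight $e^{\mu u/\sigma^2}$). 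In your version the $\Phi$'s come instead from the $1/\lambda$ partial-fraction pieces. Your building blocks do check out: the split $L_0^X(\lambda;b)=\bigl(1+\tfrac{\mu^2}{2\lambda\sigma^2}\bigr)\frac{e^{-\mu b/\sigma^2}\cosh(bS)-1}{\sinh^2(bS)}+\frac{\mu e^{-\mu b/\sigma^2}S}{2\lambda\sinh(bS)}$ is an exact rewriting of (\ref{a=b}), the inversion of $S^ke^{-cS}$ is correct (iterated $\partial_c$ applied to the shifted stable-$\tfrac12$ density), and $e^{-cS}/\lambda$ does invert to the first-passage distribution you quote.

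The gap is that the proof is never carried out where it matters. The entire content of Theorem \ref{thm4} is the exact form of the coefficients --- the weight $(m+n+1)!/[(m+1)!\,m!\,n!]$, the factor $[1+(-1)^k(m+n+2)/(n+1)]$ in $F^{(2)}_{m,n}$, the parity restriction in $F^{(1)}_{m,n}$, the Kronecker $\delta_n$ in front of $F^{(3)}_m$, and the sign pattern in the $G$-block --- and you defer all of these to ``verify that all cross terms collapse,'' backed by small cases and an unspecified ``induction on $m$.'' There is no recursion in $m$ set up, so induction is not available as stated; the three interleaved expansions (Taylor in $(a-b)$, the geometric series for $\coth(bS)$, $1/\sinh(bS)$ and $1/\sinh^2(bS)$, and the rewriting of $S^j$ via $S^2=2\lambda/\sigma^2+\mu^2/\sigma^4$) would each have to be resummed explicitly to produce the stated distances $(2m+2n+1)b+a$, $2(m+n+1)b+a$ and $2mb+a$. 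Note also that the term $\mu e^{-\mu b/\sigma^2}S/(2\lambda\sinh(bS))$ contributes factors of the form $Se^{-cS}/\lambda$, whose inverse is a mixture of $\phi$- and $\Phi$-type terms, so the $F$/$G$ separation is not as clean as ``polynomial part versus $1/\lambda$ part''; and termwise inversion of the resulting infinite double series needs at least a word of justification (absolute convergence along the Bromwich contour, uniformly on compacts in $t$). To turn this into a proof you must either push the triple expansion through to the end, or do what the paper does and reduce to the tabulated $es$-expansion so that only a single explicit $u$-integration remains.
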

\begin{proof}
We start by rewriting (\ref{a>b}) in a more tractable way,
\begin{eqnarray}\label{integral}
\int_{-b}^0due^{\frac{\mu u}{\sigma^2}}\frac{S_{\mu,\sigma}(\lambda)\sinh[(-u)S_{\mu,\sigma}(\lambda)]}{\sinh^2[bS_{\mu,\sigma}(\lambda)]}\cdot\exp\left[T_{\mu,\sigma}(\lambda;b)(a-b)\right].\quad
\end{eqnarray}
By using the first formula on page 643 of \cite{BoroSalm02}, in their notation, we obtain the inverse Laplace transform of the integrand in (\ref{integral})
\begin{eqnarray*}
\frac{\sigma^2}{2}e^{-\frac{\mu^2t}{2\sigma^2}+\frac{\mu(u+b-a)}{\sigma^2}}\left[es_{\sigma^2t}(1,2,b,u,a-b)-es_{\sigma^2t}(1,2,b,-u,a-b)\right].
\end{eqnarray*}
After some simple manipulation, the above expression becomes
\begin{eqnarray}\label{pdfu}
\frac{2e^{-\frac{\mu^2t}{2\sigma^2}+\frac{\mu(u+b-a)}{\sigma^2}}}{\sigma \sqrt{t^3}}\sum_{m,n=0}^{\infty}\frac{(m+n+1)!}{(m+1)!m!n!}\left[\frac{2(a-b)}{\sigma\sqrt{t}}\right]^m\times\\
\nonumber\left\{\phi^{(m+2)}\left[\frac{(2m+2n+1)b+a+u}{\sigma\sqrt{t}}\right]
-\phi^{(m+2)}\left[\frac{(2m+2n+1)b+a-u}{\sigma\sqrt{t}}\right]\right\}.\quad~
\end{eqnarray}
Formula (\ref{thm4f}) follows from integration of (\ref{pdfu}) over $(-b,0)$ in $u$.
\end{proof}

One can let $a=b$ in (\ref{pdfu}) to get a similar joint probability density as that in Proposition 1 of \cite{ZhanHadj09}.

Similarly, we can invert (\ref{b>a}) analytically to get the joint density
\begin{eqnarray*}P(T_D(a)\in dt, \sup_{s\le t}DU_s\in a+dz)&=&\frac{\partial}{\partial z}p^{(\mu)}(t;a,a+z)dtdz\qquad\forall a,z>0.\end{eqnarray*}
In particular we have
\begin{theorem}\label{thm5}
For a Brownian motion with constant drift $\mu$ and constant volatility $\sigma$, any $a,z>0$, we have
\begin{eqnarray}\label{thm5f}
\frac{\partial}{\partial z}\nonumber p^{(\mu)}(t;a,a+z)&=&-\frac{4e^{-\frac{\mu^2t}{2\sigma^2}+
\frac{\mu(z-a)}{\sigma^2}}}{\sigma\sqrt{t^3}}\sum_{m,n=0}^{\infty}\frac{(m+n+2)!}{(m+2)!m!n!}\left(\frac{2z}{\sigma\sqrt{t}}\right)^m\\
\nonumber&~&\times\left[2F_{m,n}^{(1)}(t,z)-e^{\frac{\mu a}{\sigma^2}}F_{m,n}^{(2)}(t,z)-\delta_ne^{\frac{\mu a}{\sigma^2}}F_m^{(3)}(t,z)\right]\\
\nonumber&~&-\frac{4\mu^3}{\sigma^4}e^{\frac{\mu(z-a)}{\sigma^2}}\sum_{m,n=0}^{\infty}\frac{(m+n+2)!}{(m+2)!m!n!}\left(\frac{2\mu z}{\sigma^2}\right)^m\times\\
\nonumber&~&\left[G_{m+n+\frac{1}{2}}^{(\mu)}(t,z)-(-1)^m G_{m+n+\frac{1}{2}}^{(-\mu)}(t,z)-e^{\frac{\mu a}{\sigma^2}}G_{m+n}^{(\mu)}(t,z)\right.\\
&~&\left.+(-1)^me^{\frac{\mu a}{\sigma^2}}G_{m+n+1}^{(-\mu)}(t,z)\right]
\end{eqnarray}
where
\begin{align*}
F_{m,n}^{(1)}(t,z)=&\sum_{k=0}^{\left\lfloor \frac{m+2}{2}\right\rfloor}\left[\frac{\mu\sqrt{t}}{\sigma}\right]^{2k}\phi^{(m+2-2k)}\left[\frac{(2m+2n+3)a+z}{\sigma\sqrt{t}}\right],\\
F_{m,n}^{(2)}(t,z)=&\sum_{k=0}^{m+2}\left[\frac{\mu\sqrt{t}}{\sigma}\right]^{k}\left[(-1)^k+\frac{m+n+3}{n+1}\right]\phi^{(m+2-k)}\left[\frac{(2m+2n+4)a+z}{\sigma\sqrt{t}}\right]\qquad\\
F_m^{(3)}(t,z)=&\sum_{k=0}^{m+2}\left[\frac{\mu\sqrt{t}}{\sigma}\right]^k\phi^{(m+2-k)}\left[\frac{(2m+2)a+z}{\sigma\sqrt{t}}\right],\\
G_m^{(\mu)}(t,z)=&e^{\frac{\mu[2(m+1)a+z]}{\sigma^2}}\Phi\left[\frac{2(m+1)a+z+\mu t}{\sigma\sqrt{t}}\right].
\end{align*}\end{theorem}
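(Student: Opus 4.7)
The plan is to parallel the Laplace inversion argument from the proof of Theorem \ref{thm4}, but starting from (\ref{b>a}) (valid in the regime $b>a$) instead of (\ref{a>b}). Since
\[
\frac{\partial}{\partial z} L_0^X(\lambda;a,a+z) \;=\; \int_0^\infty e^{-\lambda t}\,\frac{\partial}{\partial z}p^{(\mu)}(t;a,a+z)\,dt,
\]
the joint density in question is the inverse Laplace transform in $t$ of the left-hand side.

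First I would differentiate (\ref{b>a}) in $z$ and invoke the explicit form $J_0^X(\lambda;a)=-S_{\mu,\sigma}(\lambda)e^{-\mu a/\sigma^2}/(\sinh[aS_{\mu,\sigma}(\lambda)]\,T_{-\mu,\sigma}(\lambda;a))$: the factor $T_{-\mu,\sigma}(\lambda;a)$ cancels cleanly, yielding the compact transform
\[
\frac{\partial}{\partial z}L_0^X(\lambda;a,a+z)\;=\;L_0^{-X}(\lambda;a)\cdot\frac{S_{\mu,\sigma}(\lambda)\,e^{-\mu a/\sigma^2}}{\sinh[aS_{\mu,\sigma}(\lambda)]}\cdot e^{T_{-\mu,\sigma}(\lambda;a)z}.
\]
Next I would replace $L_0^{-X}(\lambda;a)$ by the integral representation obtained by specializing Theorem \ref{thm1} to the reflected process $-X$ (with drift $-\mu$), using the drifted-Brownian formula for $\ell^{-X}$. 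This recasts $\frac{\partial}{\partial z}L_0^X(\lambda;a,a+z)$ as a single integral over $u\in(-a,0)$ whose integrand is structurally identical to (\ref{integral}), except that $\sinh^2[aS_{\mu,\sigma}(\lambda)]$ is replaced by $\sinh^3[aS_{\mu,\sigma}(\lambda)]$ and the exponential carries $T_{-\mu,\sigma}(\lambda;a)z$ rather than $T_{\mu,\sigma}(\lambda;b)(a-b)$. Term-by-term Laplace inversion via the $es$-family identity on page 643 of \cite{BoroSalm02}, exactly as in the proof of Theorem \ref{thm4}, followed by integration over $u\in(-a,0)$, then delivers (\ref{thm5f}).

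The extra factor of $\sinh$ in the denominator is precisely what lifts the differentiation order of $\phi$ from $m+1$ to $m+2$ and promotes the combinatorial coefficient from $(m+n+1)!/((m+1)!\,m!\,n!)$ to $(m+n+2)!/((m+2)!\,m!\,n!)$; the four summand families $F^{(1)}_{m,n},F^{(2)}_{m,n},F^{(3)}_m$ and $G^{(\pm\mu)}$ in (\ref{thm5f}) arise from separating the $\sinh^{-3}$ expansion into its bulk and boundary contributions, with the Kronecker $\delta_n$ capturing the $n=0$ endpoint. I expect the main obstacle to be the bookkeeping across these four families: one must track signs, parities, and derivative orders of $\phi$ carefully, and verify that the double series reassembles into (\ref{thm5f}). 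Once the inversion identity is applied term by term, the remaining work is algebraic verification.
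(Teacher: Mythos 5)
Your proposal follows essentially the same route as the paper: starting from (\ref{b>a}), using the closed form of $J_0^X(\lambda;a)$ so that the factor $T_{-\mu,\sigma}(\lambda;a)$ cancels, expanding $L_0^{-X}(\lambda;a)$ as an integral over $u\in(-a,0)$ with a $\sinh^3$ denominator, and inverting term by term via the $es$-formula on page 643 of \cite{BoroSalm02} before integrating in $u$. The paper merely keeps the representation in integrated form, writing $L_0^X(\lambda;a,b)=L_0^X(\lambda;a)+\int_{-a}^0 du\int_0^{b-a}dz\,(\cdots)$ and reading off the $z$-derivative from the inner integrand, which is equivalent to your differentiating first; the remaining algebra you defer is exactly the manipulation the paper performs to reach (\ref{54}) and (\ref{thm5f}).
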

\begin{proof}
We start from the equality
\begin{align*}
&L_0^X(\lambda;a,b)\\
=&J_0^X(\lambda;a)-L_0^{-X}(\lambda;a)e^{T_{-\mu,\sigma}(\lambda;a)(b-a)}J_0^X(\lambda;a)\\
=&J_0^X(\lambda;a)-L_0^{-X}(\lambda;a)J_0^X(\lambda;a)+\frac{S_{\mu,\sigma}(\lambda;a)L_0^{-X}(\lambda;a)}{\sinh[aS_{\mu,\sigma}(\lambda;a)]e^{\frac{\mu a}{\sigma^2}}}\int_0^{b-a}e^{T_{-\mu,\sigma}(\lambda;a)z}dz\\
=&L_0^{X}(\lambda;a)+\int_{-a}^0du\int_0^{b-a}dz\frac{[S_{\mu,\sigma}(\lambda;a)]^2\sinh[(-u)S_{\mu,\sigma}(\lambda;a)]}{\sinh^3[aS_{\mu,\sigma}(\lambda;a)]e^{\frac{\mu(u+a)}{\sigma^2}}}e^{T_{-\mu,\sigma}(\lambda;a)z}\qquad
\end{align*}
By using the first formula on page 643 of \cite{BoroSalm02}, the integrand in the last line has inverse Laplace transform
\begin{eqnarray*}
\frac{\sigma^2}{2}e^{-\frac{\mu^2t}{2\sigma^2}-\frac{\mu(u-z+a)}{\sigma^2}}[es_{\sigma^2t}(2,3,a,u,z)-es_{\sigma^2t}(2,3,a,-u,z)].
\end{eqnarray*}
After some simple manipulation, the above expression becomes
\begin{eqnarray}\label{54}
\frac{4e^{-\frac{\mu^2t}{2\sigma^2}-\frac{\mu(u-z+a)}{\sigma^2}}}{\sigma^2t^2}\sum_{m,n=0}^{\infty}\frac{(m+n+2)!}{(m+2)!m!n!}\left(\frac{2z}{\sigma\sqrt{t}}\right)^m\times\\
\nonumber\left\{\phi^{(m+3)}\left[\frac{(2m+2n+3)a+z-u}{\sigma\sqrt{t}}\right]-\phi^{(m+3)}\left[\frac{(2m+2n+3)a+z+u}{\sigma\sqrt{t}}\right]\right\}.
\end{eqnarray}
The integration of (\ref{54}) over $(-a,0)$ in $u$ yields (\ref{thm5f}) and completes the proof.
\end{proof}

\section{Application}
\subsection{Relative drawdowns and relative drawups of stock prices}
Consider the case of a stock with geometric Brownian motion dynamics under a probability measure $P$:
\begin{eqnarray}\label{gbm}
dS_t & = & \mu S_t dt+ \sigma S_t dW_t, ~ S_0=1.
\end{eqnarray}

Using Theorem \ref{thm4} and Theorem \ref{thm5}, we are in the position to address the following question:

{\bf What is the probability that this stock would drop by $\bm(\bm100\bm\times\bm\alpha\bm)\bm\%$ from its historical high before it incurs a rise of $\bm(\bm100\bm\times\bm\beta)\bm\%$ from its historical low in a pre-specified plan horizon $T$?}

First observe that
\begin{eqnarray}
\label{logdynamics}
d\log S_t & = & \nu dt + \sigma dW_t, ~\log S_0=0,
\end{eqnarray}
where $\nu=\mu-\frac{1}{2} \sigma^2$ represents the logarithm of the return of the stock.

Now define the running maximum and the running minimum of the stock process $\{S_t\}$
\begin{eqnarray*}
M_t =  \sup_{s \le t} S_s,\qquad
N_t =  \inf_{s \le t} S_s.
\end{eqnarray*}
We also let $U_D(\alpha)$ be the first time the stock drops by ($100 \times\alpha$)\%
from its historical high and $U_R(\beta)$ the first time that the stock rises by an amount equal to ($100 \times \beta$)\% from its historical low. That is,
\begin{eqnarray}
\label{U1}
U_D(\alpha) & = & \inf\{\left.t \ge 0\right| S_t= (1-\alpha) M_t\}, \\
\label{U2}
U_R(\beta) & = & \inf\{\left.t \ge 0\right| S_t= (1+\beta) N_t\}.
\end{eqnarray}

Thus, it is possible to calculate the exact expression for the probability that a percentage relative drop of ($100 \times\alpha$)\% precedes a relative rise of ($100 \times\beta$)\% by noticing that
\begin{eqnarray}
\label{g2a}
\quad\left\{\begin{array}{ll}U_D(\alpha)=&T_D(-\log(1-\alpha))\\ U_R(\beta)=&T_U(\log(1+\beta))\end{array}.\right.
\end{eqnarray}
And this probability can be calculated explicitly as
\begin{eqnarray*}
P(U_D(\alpha)\wedge T<U_R(\beta)\wedge T)&=&\int_0^Tp^{(\nu)}(t;-\log(1-\alpha),\log(1+\beta))dt.
\end{eqnarray*}

Moreover, a digital option on the event that the relative drawdown precedes the relative drawup can also be perceived as a means of protection against adverse movements in the market. In particular, the discounted payoff of this digital option can be written as
\begin{eqnarray}
PO(\alpha,\beta)&=&e^{-rt}\cdot1_{\{U_D(\alpha)\in dt, U_R(\beta)>t\}}\cdot1_{\{t\le T\}},
\end{eqnarray}
where $r>0$ is the risk-free interest rate and $T$ is the maturity of the option.

Under a risk-neutral measure $Q$, the stock price and its logarithm have the following dynamics respectively,
\begin{eqnarray}
dS_t&=&rS_tdt+\sigma S_tdW_t, ~S_0=1,\\
d\log S_t&=&\nu^{'}dt+\sigma dW_t, ~\log S_0=0,
\end{eqnarray}
where $\nu^{'}=r-\frac{1}{2}\sigma^2$.

Using (\ref{g2a}) and our results we are able to derive the risk-neutral price at time 0 of this digital option:

In the case of a perpetual option (see \cite{KaraShre01}), the risk-neutral price of the digital option is already given by the Laplace transform (\ref{a=b}), (\ref{a>b}) and (\ref{b>a}). In particular,
\begin{eqnarray*}Q[PO(\alpha,\beta)]&=&L_0^{\log S}(r;-\log(1-\alpha),\log(1+\beta)).\end{eqnarray*}
In the case of a finite life option maturing at time $T<\infty$, we can apply the densities (\ref{thm4f}) and (\ref{thm5f}) to calculate the risk-neutral price.
\begin{enumerate}
\item $(1-\alpha)(1+\beta)\le 1$:
\begin{eqnarray}\label{price}
\qquad Q[PO(\alpha,\beta)]&=&\int_0^T e^{-rt}p^{(\nu^{'})}(t;-\log(1-\alpha),\log(1+\beta))dt;
\end{eqnarray}
\item $\delta=(1-\alpha)(1+\beta)>1$:
\begin{eqnarray}\label{price1}
&~&Q[PO(\alpha,\beta)]-Q[PO(\alpha,\alpha/(1-\alpha))]\\
\nonumber&=&\int_0^T e^{-rt}\left[\int_0^{\log\delta}\frac{\partial}{\partial z}p^{(\nu^{'})}(t;-\log(1-\alpha),-\log(1-\alpha)+z)dz\right]dt.
\end{eqnarray}
\end{enumerate}

\subsection{The problem of transient signal detection and identification of two sided changes}
In this example, we present the problem of transient signal detection and identification of two-sided changes in the drift of a general diffusion process. More specifically, we give the formulas for the probabilities of misidentifying the direction of the signal both in the case of exponential life transient signals and in the case of deterministic life transient signals. In particular, let $\{X_t, t\ge 0\}$ be a diffusion process with the initial value $X_0=x$ and the following dynamics up to a deterministic time $\tau$:
\begin{eqnarray}\label{nominal}
dX_t&=&\sigma(X_t)dW_t,\quad t\le\tau.
\end{eqnarray}
For $t>\tau$, the process evolves according to one of the following stochastic differential equations:
\begin{eqnarray}
\label{pchange}
dX_t&=&\mu(X_t)dt+\sigma(X_t)dW_t\quad t>\tau,\\
\label{nchange}dX_t&=&-\mu(X_t)dt+\sigma(X_t)dW_t\quad t>\tau,
\end{eqnarray}
with initial condition $y=X_\tau$.

The time of the regime change, $\tau$, is deterministic but unknown. We observe the process $\{X_t, t\ge0\}$ sequentially and our goal is to detect the time of onset of the signal, as well as possibly identity its direction, before the signal disappears.

Let us denote by $P_x^{\tau, +}$ and $P_x^{\tau,-}$ the probability measures generated on the space of continuous functions $C[0,\infty)$ by the process $\{X_t, t\ge 0\}$, if the regime changes at time $\tau$ from (\ref{nominal}) to (\ref{pchange}) and from (\ref{nominal}) to (\ref{nchange}), respectively.

In this context suppose that the drawdown of $a$ units, $T_D(a)$, can be used as a means of detecting the change in the dynamics of $\{X_t, t\ge 0\}$ from (\ref{nominal}) to (\ref{nchange}). Then the drawup of $b$ units, $T_U(b)$ may be used as a means of detecting the change in the dynamics of $\{X_t, t\ge 0\}$ from (\ref{nominal}) to (\ref{pchange}). For example, in the case that $\mu(\cdot)=\mu>0$, it is easy to see that the drawup could be used as a means of detecting a change from (\ref{nominal}) to (\ref{pchange}), while the drawdown would provide a means of detecting a change from (\ref{nominal}) to (\ref{nchange}). In particular, $T(a,b)=\min\{T_D(a), T_U(b)\}$, also known as the two-sided CUSUM (see Khan \cite{Khan08}), has been extensively used as a means of detecting two-sided alternatives in the drift (see Barnard \cite{Barn59}, Dobben \cite{Dobben68}, Bissell \cite{Biss69}, Woodall \cite{Wood84}, and Poor \& Hadjiliadis \cite{PHadj08}). In the special case in which $\mu(\cdot)=\mu>0$ and there is no reason to believe that a change from (\ref{nominal}) to (\ref{nchange}) is more or less likely than a change from (\ref{nominal}) to (\ref{pchange}), it is natural to use thresholds $a=b$. However, in the general case different thresholds $a$ and $b$ could be used.

In many applications in engineering, the life of the signal after its onset is often limited. The lifetime of the signal is also random and may not depend on the dynamics of the underlying process. In the case of exponential life transient signals, the signals are present (after $\tau$) for a period of time $\zeta$, where $\zeta$ is an independent exponentially distributed random variable with parameter $\lambda>0$. Theorems \ref{thm1}, \ref{thm2} and \ref{thm3} can be used to compute the probability of sequential misidentification of the signal in the case that the onset of the signal occurs at time 0. More specifically,
\begin{eqnarray}\label{fap}
\nonumber P_x^{0,+}(T_D(a)\wedge\zeta<T_U(b)\wedge\zeta)&=&\int_0^\infty P_x^{0,+}(T_D(a)\wedge t<T_U(b)\wedge t)\cdot \lambda e^{-\lambda t}dt\\
\nonumber&=&\int_0^\infty e^{-\lambda t}P_x^{0,+}(T_D(a)\in dt, T_U(b)>t)dt\\
&=&L_x^{X^{0,+}}(\lambda; a, b),
\end{eqnarray}
where $X^{0,+}$ follows (\ref{pchange}) with $\tau=0$, expresses the probability that an alarm indicating that the regime switched to (\ref{nchange}) will occur before $\zeta$ and an alarm indicating that the regime switched to (\ref{nchange}) while in fact (\ref{pchange}) is the true regime. Thus (\ref{fap}) can be seen as the probability of a misidentification. Moreover, in the case that the density of the random variable $X_\tau$ admits a closed-form representation, we can also compute
\begin{eqnarray}
&~&\int P_y^{\tau,+}(T_D(a)\circ\theta(\tau)\wedge\zeta<T_U(b)\circ\theta(\tau)\wedge\zeta)f_{X_\tau}(y|x)dy\\
\nonumber&=&\int L_y^{X^{0,+}}(\lambda, a, b)f_{X_\tau}(y|x)dy,
\end{eqnarray}
which can be interpreted as the aggregate probability (or unconditional probability) of a misidentification for any given change-point $\tau$.

In the case of deterministic life transient signals, the signals are present (after $\tau$) for a finite period of time $T$. Using Theorem \ref{thm4} we are still able to compute the probability of misidentification for Brownian motion ($\sigma(\cdot)=\sigma>0, \mu(\cdot)=\mu$). More specifically,
\begin{eqnarray}
\qquad P_x^{\tau,+}(T_D(a)\circ\theta(\tau)\wedge T<T_U(a)\circ\theta(\tau)\wedge T)=\int_0^Tp^{(\mu)}(t; a, a)dt,
\end{eqnarray}
expresses the probability of misidentification for any given change-point $\tau$.

\section{Conclusion}
In this paper we derive a closed-form expression for the Laplace transform of the drawdown of $a$ units when it precedes the drawup of $b$ units for a general diffusion process. We then derive the probability density of a drawdown when it precedes a drawup in the special case of a drifted Brownian motion model by inverting the Laplace transform. Although several authors in the literature have studied drawdowns and drawups (\cite{Taylor}, \cite{JPL77}, \cite{HadjVece06}, \cite{Posp09}, \cite{SalmVall}, \cite{ZhanHadj09}), this paper summarizes the probabilistic properties of a drawdown on the event that it precedes a drawup for a general diffusion process. These results are of practical interest in two main areas: financial risk-management and transient signal detection and identification.

\end{document}